\newtheorem{theorem}{Theorem}[section]
\newtheorem{lemma}[theorem]{Lemma}
\newtheorem{proposition}[theorem]{Proposition}
\newtheorem{corollary}[theorem]{Corollary}
\theoremstyle{definition}
\theoremstyle{remark}
\newtheorem{remark}[theorem]{Remark}
\newtheorem{question}[theorem]{Question}
\numberwithin{equation}{subsection}
\DeclareMathOperator{\fm}{\mathfrak{m}}
\DeclareMathOperator{\fp}{\mathfrak{p}}
\DeclareMathOperator{\Tor}{Tor}
\DeclareMathOperator{\cC}{\mathcal{C}}
\DeclareMathOperator{\cF}{\mathcal{F}}
\DeclareMathOperator{\fa}{\mathfrak{a}}
\DeclareMathOperator{\Supp}{Supp}
\DeclareMathOperator{\reg}{reg}
\DeclareMathOperator{\Tot}{Tot}
\DeclareMathOperator{\Spec}{Spec}
\DeclareMathOperator{\Sing}{Sing}
\DeclareMathOperator{\ZZ}{\mathbb{Z}}
\DeclareMathOperator{\QQ}{\mathbb{Q}}
\renewcommand{\phi}{\varphi}
\renewcommand{\theta}{\vartheta}
\renewcommand{\epsilon}{\varepsilon}
\renewcommand{\to}[1][]{\xrightarrow{\ #1\ }}
\title{A note on the growth of regularity with respect to Frobenius}
\author{Wenliang Zhang}
\address{Department of Mathematics, Statistics, and Computer Science, University of Illinois, Chicago, IL 60607, USA}
\email{wlzhang@uic.edu}
\thanks{The author is partially supported by the National Science Foundation through grant DMS \#1405602.}
\begin{document}
\maketitle

\begin{abstract}
Let $R=k[x_1,\dots,x_n]/I$ be a graded $k$-algebra where $k$ is a field of prime characteristic and let $J$ be a homogeneous ideal in $R$. Denote $(x_1,\dots,x_n)$ by $\fm$. We prove that there is a constant $C$ (independent of $e$) such that the regularity of $H^s_{\fm}(R/J^{[p^e]})$ is bounded above by $Cp^e$ for all $e\geq 1$ and all integers $s$ such that $s+1$ is at least the dimension of the locus where $R/J$ doesn't have finite projective dimension. 
\end{abstract}

\section{introduction}
Let $S=k[x_1,\dots,x_n]$ be a polynomial ring over a field $k$ of characteristic $p>0$ with the standard grading. Let $\fm$ be the irrelevant maximal homogeneous ideal of $S$. For each graded artinian $S$-module $N$, we set $a(N):=\max\{j\mid N_j\neq 0\}$. And the Castelnuovo-Mumford regularity of a finitely generated graded $S$-module $M$ is defined as
\[\reg(M):=\max\{a(H^j_{\fm}(M))+j\mid j\in \ZZ\}.\]

For each ideal $\fa$ of $S$, let $\fa^{[p^e]}$ denotes the ideal generated by $\{r^{p^e}\mid r\in \fa\}$. \cite{KatzmanComplexityFrobenius} asked the following question:
\begin{question}
\label{question: linear growth wrt Frobenius}
Let $I,J$ be graded ideals of $S$. Does there exist a constant $C$ such that $\reg(\frac{S}{I+J^{[p^e]}})\leq Cp^e$ for all $e\geq 1$?
\end{question}

When such a constant exists, we say that $\reg(\frac{S}{I+J^{[p^e]}})$ grows linearly with respect to $p^e$ (or with respect to Frobenius). While the original motivation behind Question \ref{question: linear growth wrt Frobenius} concerns whether tight closure commutes with localization at a single element, it was discovered in \cite{KatzmanZhangRegularityandFJumpingNumbers} that a positive answer to Question \ref{question: linear growth wrt Frobenius} will imply that the $F$-jumping coefficients of any ideal in a finitely generated $k$-algebra are discrete. A positive answer to Question \ref{question: linear growth wrt Frobenius} is known in some cases; for instance, when $\dim(\Sing(S/I)\cap V(J))\leq 1$ (\cite{ChardinRegularityFunctors},~\cite{BrennerLinearBound},~\cite{KatzmanZhangRegularityandFJumpingNumbers}) and when $S/I$ has finite graded Frobenius representation type (\cite{KatzmanSchwedeSinghZhangRIngsofFrobeniusOperators}).

Let $R=k[x_1,\dots,x_n]/I$ be a standard graded ring over a field $k$ of characteristic $p>0$ and $\fm=(x_1,\dots,x_n)$. Let $F:R\to R$ denote the Frobenius endomorphism and $F^e_*R$ denote the target of $F^e$. For each $e\geq 1$, the $R$-module $F^e_*R$ is graded via 
\[\deg(F^e_*r)=\frac{1}{p^e}\deg(r)\]
for every homogeneous element $r\in R$. Consequently, for each graded $R$-module $M$, the $R$-module $M\otimes_R F^e_*R$ is graded by $\deg(z\otimes F^e_*r)=\deg(z)+\frac{1}{p^e}\deg(r)$.

Our main result is the following theorem:
\begin{theorem}
\label{main theorem}
Let $R=k[x_1,\dots,x_n]/I$ be a standard graded ring over a field $k$ of characteristic $p>0$ and $M$ be a finitely generated graded $R$=module. Set 
\[Npd(M)=\{\fp\in \Spec(R)\mid M_{\fp} {\rm \ does\ not\ have\ finite\ projective\ dimension\ over\ } R_{\fp}\}.\]
Then there is a constant $C$ such that 
\[a(H^s_{\fm}(M\otimes_RF^e_*R))\leq C\] 
for all $s\geq \dim(Npd(M))-1$ and all $e\geq 1$.
\end{theorem}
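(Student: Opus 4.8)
The plan is to reduce the statement to a linear bound for the Peskine--Szpiro Frobenius functor and then to induct along the minimal free resolution of $M$, the whole point being that the higher Frobenius functors of $M$ are supported on $Npd(M)$ and not on the full singular locus of $R$. Write $F^e_R$ for the Frobenius functor, so that $F^e_R(M)=\operatorname{coker}(\phi^{[p^e]})$ for a finite presentation $R^b\xrightarrow{\phi}R^a\to M\to 0$; it is additive and right exact, sends $R(-d)$ to $R(-p^ed)$, and $F^e_R(R/J)=R/J^{[p^e]}$. Checking the defining relations, there is a graded isomorphism $M\otimes_RF^e_*R\cong F^e_*\bigl(F^e_R(M)\bigr)$, where $F^e_*$ denotes restriction of scalars along $F^e$; since $F^e_*$ is exact and $\sqrt{\fm^{[p^e]}}=\sqrt{\fm}$ it commutes with $\Gamma_\fm$, so $H^s_\fm(M\otimes_RF^e_*R)\cong F^e_*\bigl(H^s_\fm(F^e_R(M))\bigr)$, and comparing the grading $\deg(F^e_*r)=\tfrac1{p^e}\deg r$ with the standard grading on $F^e_R(M)$ gives $a\bigl(H^s_\fm(M\otimes_RF^e_*R)\bigr)=\tfrac1{p^e}a\bigl(H^s_\fm(F^e_R(M))\bigr)$. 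Thus it is enough to produce a constant $C$, independent of $e$, with $a\bigl(H^s_\fm(F^e_R(M))\bigr)\le Cp^e$ for all $e\ge1$ and all $s\ge\dim Npd(M)-1$.

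The key lemma I would isolate is that for $i\ge1$ the left derived functor $L_iF^e_R(N)$ of any finitely generated $N$ is supported inside $Npd(N)$. Indeed, if $\fp\notin Npd(N)$ then $N_\fp$ has a finite free resolution over $R_\fp$, and by the Peskine--Szpiro acyclicity lemma applying $F^e_{R_\fp}$ to it again gives an acyclic complex (the Buchsbaum--Eisenbud rank and depth conditions survive because $I_r(\phi^{[p^e]})=I_r(\phi)^{[p^e]}$ has the same height as $I_r(\phi)$), so $L_iF^e_R(N)_\fp=0$. Applying this to each syzygy $\Omega^jM$ and using that $N$ and its syzygies have finite projective dimension at exactly the same primes (so $Npd(\Omega^jM)=Npd(M)$), one gets $\dim\Supp L_iF^e_R(\Omega^jM)\le\dim Npd(M)$ for all $i\ge1$ and $j\ge0$.

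Now let $F_\bullet\to M$ be the minimal graded free resolution, $F_i=\bigoplus_jR(-a_{ij})$, and let $\Omega^jM$ be the $j$th syzygy. Applying $F^e_R$ to $0\to\Omega^{j+1}M\to F_j\to\Omega^jM\to0$, right exactness together with $L_1F^e_R(F_j)=0$ yields the exact sequence
\[0\to L_1F^e_R(\Omega^jM)\to F^e_R(\Omega^{j+1}M)\to F^e_R(F_j)\to F^e_R(\Omega^jM)\to 0.\]
Splitting this into two short exact sequences and passing to the long exact sequences in local cohomology, the error term $L_1F^e_R(\Omega^jM)$ enters the estimate for $a\bigl(H^s_\fm(F^e_R(\Omega^jM))\bigr)$ only through $H^{s+2}_\fm(L_1F^e_R(\Omega^jM))$, which vanishes by Grothendieck vanishing once $s+2>\dim Npd(M)$, i.e. for $s\ge\dim Npd(M)-1$. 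For such $s$ this gives
\[a\bigl(H^s_\fm(F^e_R(\Omega^jM))\bigr)\le\max\Bigl(a\bigl(H^s_\fm(F^e_R(F_j))\bigr),\ a\bigl(H^{s+1}_\fm(F^e_R(\Omega^{j+1}M))\bigr)\Bigr),\]
and since $\dim Npd(\Omega^{j+1}M)=\dim Npd(M)$ the hypothesis on the cohomological degree is preserved under $(j,s)\mapsto(j+1,s+1)$. Iterating at most $\dim R+2$ times, the last syzygy term disappears because $H^q_\fm$ of a finitely generated module vanishes for $q>\dim R$, leaving $a\bigl(H^s_\fm(F^e_R(M))\bigr)\le\max_{0\le i\le\dim R+2}a\bigl(H^{s+i}_\fm(F^e_R(F_i))\bigr)$. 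Finally $F^e_R(F_i)=\bigoplus_jR(-p^ea_{ij})$, so $a\bigl(H^q_\fm(F^e_R(F_i))\bigr)\le\max_q a(H^q_\fm R)+p^e\max\{a_{ij}:i\le\dim R+2\}$, and since only finitely many of the $F_i$ are used the right-hand side is $\le Cp^e$ for a constant $C$ not depending on $e$; this proves the theorem.

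The main obstacle is precisely the support statement for the higher Frobenius functors: knowing that $L_iF^e_R$ is controlled by $Npd(M)$ rather than by all of $\Sing(R)$ is what produces the sharp threshold $s\ge\dim Npd(M)-1$, and it is where the Peskine--Szpiro acyclicity input really does the work. I would expect the rest — the long exact sequence bookkeeping, Grothendieck vanishing, the identification $M\otimes_RF^e_*R\cong F^e_*(F^e_R(M))$, and tracking the degree rescaling by $p^e$ — to be routine.
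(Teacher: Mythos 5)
Your proof is correct and rests on the same essential input as the paper — the Peskine--Szpiro acyclicity lemma, used to show that the higher Frobenius-derived functors of $M$ (equivalently $\Tor_{\geq 1}^R(F^e_*R,M)$) are supported on $Npd(M)$, so that local cohomology in degrees $\geq \dim Npd(M)+1$ kills them. But your organization of the argument is genuinely different. The paper builds the double complex $\cF_\bullet\otimes_R\cC_\bullet\otimes_RF^e_*R$ and plays its two spectral sequences against each other: one (taking homology horizontally first, $E^2=\Tor_i^R(H^j_\fm(F^e_*R),M)$) gives a uniform bound on $a$ of the total complex using only $a(H^j_\fm(F^e_*R))=\tfrac1{p^e}a(H^j_\fm(R))$, and the other ($E^2_{i,j}=H^{n-i}_\fm(\Tor_j^R(F^e_*R,M))$) identifies $H^s_\fm(M\otimes_RF^e_*R)$ with $E^\infty_{n-s,0}$ once the differentials into $E^2_{n-s-2,1}=H^{s+2}_\fm(\Tor_1)$ and higher vanish for $s\geq\dim Npd(M)-1$. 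You instead unroll the resolution: break off one syzygy at a time, split the resulting four-term exact sequence $0\to L_1F^e_R(\Omega^jM)\to F^e_R(\Omega^{j+1}M)\to F^e_R(F_j)\to F^e_R(\Omega^jM)\to 0$ into two short exact sequences, and chase long exact sequences in local cohomology, killing the $L_1$ term by Grothendieck vanishing in the same degree range. This is more elementary and more explicit about where the final constant comes from: you bound directly by $a(H^{s+i}_\fm(F^e_R(F_i)))=a(H^{s+i}_\fm(R))+p^e\max_j a_{ij}$, whereas the paper's bound is absorbed into the $E^\infty$-page of the first spectral sequence. The two arguments are of course the same spectral sequence seen at different levels of packaging, and both correctly require the vanishing of all relevant $H^{s+r}_\fm(\Tor_{r-1})$, $r\geq 2$ (you, by inducting $(j,s)\mapsto(j+1,s+1)$; the paper, by noting all outgoing differentials $d^r$ with $r\geq 2$ land in zero groups). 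One small remark: your justification of the support lemma via Buchsbaum--Eisenbud and $I_r(\phi^{[p^e]})=I_r(\phi)^{[p^e]}$ is fine (the identity holds because $F^e$ is a ring map and so commutes with determinants, hence $\sqrt{I_r(\phi^{[p^e]})}=\sqrt{I_r(\phi)}$), but it is cleaner to cite Peskine--Szpiro's acyclicity theorem directly, as the paper does, rather than re-derive it.
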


An immediate corollary is:
\begin{corollary}
\label{corollary: consequence for ideals}
Let $R=k[x_1,\dots,x_n]/I$ be a standard graded ring over a field $k$ of characteristic $p>0$ and $J$ be a homogeneous ideal of $R$. Then $a(H^s_{\fm}(R/J^{[p^e]}))$ grows linearly with respect to $p^e$ for each $s\geq \dim(Npd(R/J))-1$.

In particular, $a(H^s_{\fm}(R/J^{[p^e]}))$ grows linearly with respect to $p^e$ for $s\geq \dim(R/J)-1$.
\end{corollary}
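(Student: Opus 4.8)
Since Corollary~\ref{corollary: consequence for ideals} is extracted from Theorem~\ref{main theorem} by a short formal manipulation, the plan is to record that manipulation first and then sketch how I would prove the theorem, where the substance lies. Applying Theorem~\ref{main theorem} with $M=R/J$, one computes $(R/J)\otimes_R F^e_* R$: since $a\cdot F^e_* r=F^e_*(a^{p^e}r)$ for $a\in J$, we get $J\cdot F^e_* R=F^e_*(J^{[p^e]})$, hence $(R/J)\otimes_R F^e_* R\cong F^e_*(R/J^{[p^e]})$ as graded $R$-modules. Because $\fm$-power torsion is unchanged by the Frobenius twist, $H^s_\fm(-)$ commutes with $F^e_*(-)$, and as the grading of $F^e_*(-)$ is rescaled by $1/p^e$ one gets $a(H^s_\fm(F^e_*(N)))=\tfrac1{p^e}\,a(H^s_\fm(N))$ for every graded $R$-module $N$. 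Therefore the uniform bound $a(H^s_\fm((R/J)\otimes_R F^e_* R))\le C$ of Theorem~\ref{main theorem} becomes $a(H^s_\fm(R/J^{[p^e]}))\le Cp^e$ for all $e\ge1$ and all $s\ge\dim(Npd(R/J))-1$; and since $Npd(R/J)\subseteq\Supp(R/J)=V(J)$ gives $\dim(Npd(R/J))\le\dim(R/J)$, the final clause is the special case $s\ge\dim(R/J)-1$.

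For the theorem itself I would climb a free resolution. Fix a graded free resolution $G_\bullet\to M$ over $R$ by finitely generated free modules, with syzygy modules $Z_{-1}=M$ and $Z_i$ ($i\ge0$) fitting into $0\to Z_i\to G_i\to Z_{i-1}\to 0$. Over a local ring a module has finite projective dimension exactly when any (hence every) syzygy does, so $Npd(Z_i)=Npd(M)$ for all $i$; write $d=\dim(Npd(M))$. Applying $-\otimes_R F^e_* R$ and using freeness of $G_i$, the long exact sequence breaks into $0\to\Tor_1^R(Z_{i-1},F^e_* R)\to Z_i\otimes_R F^e_* R\to Y_i\to 0$ and $0\to Y_i\to G_i\otimes_R F^e_* R\to Z_{i-1}\otimes_R F^e_* R\to 0$. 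For $\fp\notin Npd(M)$ the localization $(Z_{i-1})_\fp$ has finite projective dimension over $R_\fp$, and since $(F^e_* R)_\fp\cong F^e_*(R_\fp)$, the theorem of Peskine--Szpiro gives $\Tor_1^R(Z_{i-1},F^e_* R)_\fp=0$; hence this $\Tor$ is supported on $Npd(M)$, so its local cohomology vanishes in degrees $>d$. Moreover $G_i\otimes_R F^e_* R$ is a finite direct sum of shifts of $F^e_* R$, and from $H^t_\fm(F^e_* R)=F^e_*(H^t_\fm(R))$ one reads off that $a(H^t_\fm(G_i\otimes_R F^e_* R))$ is at most $|a(H^t_\fm(R))|$ plus the finitely many twists occurring in $G_i$, a bound independent of $e$. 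Feeding all of this into the two long exact sequences in $H^\bullet_\fm$, the $\Tor_1$ term sits in degree $s+2$, so for $s\ge d-1$ it disappears and we are left with
\[a\!\left(H^s_\fm(Z_{i-1}\otimes_R F^e_* R)\right)\le\max\!\left(a\!\left(H^s_\fm(G_i\otimes_R F^e_* R)\right),\ a\!\left(H^{s+1}_\fm(Z_i\otimes_R F^e_* R)\right)\right).\]

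Iterating this inequality up the resolution, starting from $Z_{-1}=M$, raises the cohomological index by one each time; after at most $\dim(R)+1$ steps the index exceeds $\dim(R)$, so the trailing term $a(H^\bullet_\fm(Z_\bullet\otimes_R F^e_* R))=-\infty$, and what remains is a maximum of finitely many quantities $a(H^t_\fm(G_i\otimes_R F^e_* R))$ with $i$ and $t$ confined to fixed ranges — a constant $C=C(M)$ independent of $e$. I expect the only real obstacle to be this uniformity in $e$, which rests on two facts: (i) the Frobenius twist $G_i\otimes_R F^e_* R$ of a \emph{fixed} free module has $a$-invariants bounded independently of $e$ — precisely the point of the $1/p^e$-rescaled grading, via $H^t_\fm(F^e_* R)=F^e_*(H^t_\fm(R))$; and (ii) the number of iterations needed is bounded independently of $e$, which holds because the cohomological index strictly increases while $\dim(R)$ is fixed. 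The one place needing genuine (if routine) care is the support statement for $\Tor_1^R(Z_{i-1},F^e_* R)$, where one localizes and combines Peskine--Szpiro with the identification $(F^e_* R)_\fp\cong F^e_*(R_\fp)$.
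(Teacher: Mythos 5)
Your reduction of the Corollary to Theorem~\ref{main theorem} matches the paper's proof exactly: the isomorphism $(R/J)\otimes_R F^e_*R\cong F^e_*(R/J^{[p^e]})$, the identification $H^s_{\fm}(F^e_*N)\cong F^e_*H^s_{\fm}(N)$, the $1/p^e$ rescaling of $a$-invariants, and $Npd(R/J)\subseteq\Supp(R/J)$ for the final clause are precisely the steps used there. Since the real content lies in the Theorem and you sketch a proof of it, it is worth noting that your route differs in packaging from the paper's, although it rests on the same two ingredients. The paper forms the double complex $\cF_{\bullet}\otimes_R\cC_{\bullet}\otimes_RF^e_*R$ and compares two spectral sequences: one has $E^2$-page $\Tor^R_i(H^j_{\fm}(F^e_*R),M)$, whose $a$-invariants are bounded uniformly in $e$ by Lemma~\ref{lemma: linear growth tor of lc and module}, hence so is the total homology (Lemma~\ref{lemma: linear growth of total complex}); the other has $E^2_{i,j}=H^{n-i}_{\fm}(\Tor^R_j(F^e_*R,M))$, and Peskine--Szpiro confines $\Tor^R_{\geq1}(F^e_*R,M)$ to $Npd(M)$, so for $s\geq\dim(Npd(M))-1$ the entry $E^2_{n-s,0}=H^s_{\fm}(M\otimes_RF^e_*R)$ survives to $E^\infty$ as a subquotient of the bounded total homology. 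You instead unroll this into iterated long exact sequences on the syzygy short exact sequences $0\to Z_i\to G_i\to Z_{i-1}\to 0$ tensored with $F^e_*R$, which is equivalent in substance but spectral-sequence-free. The two inputs are identical in both approaches: Peskine--Szpiro puts $\Tor_1^R(Z_{i-1},F^e_*R)$ in dimension $\leq\dim(Npd(M))$, killing its $H^{s+2}_{\fm}$ when $s\geq\dim(Npd(M))-1$, and the $1/p^e$-rescaled grading keeps $a(H^t_{\fm}(G_i\otimes_RF^e_*R))$ bounded uniformly in $e$; your recursion terminates because $H^{>\dim R}_{\fm}=0$, which is exactly what degenerates the paper's spectral sequences by page $\dim R+1$. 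Your sketch is sound, and the two points you flag as needing care ($Npd(Z_i)=Npd(M)$ for all $i$, and finiteness of the set of contributing pairs $(i,t)$ independently of $e$) are indeed the crux of the uniformity; both hold for the reasons you give. The trade-off is cosmetic: the paper abstracts the bookkeeping into Lemma~\ref{lemma: linear growth of total complex}, while your dimension-shifting version keeps it explicit.
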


\section*{acknowledgement}We would like to thank Mordechai Katzman for inspiring discussions.

\section{Proof of Theorem \ref{main theorem}}
%Let $R=k[x_1,\dots,x_n]/I$ be a standard graded ring over a field $k$ of characteristic $p>0$ and $\fm=(x_1,\dots,x_n)$. Let $F:R\to R$ denote the Frobenius endomorphism. 

\begin{remark}
\label{remark: Frobenius of localization}
It is well known that there is a degree-preserving isomorphism $R_f\otimes_RF^e_*R\xrightarrow{a\otimes r'\mapsto a^{p^e}r'} F^e_*R_f$ for each homogeneous element $f\in R$. Consequently, we have
\[H^j_{\fm}(F^e_*R)\cong F^e_*H^j_{\fm}(R).\] 
\end{remark}

%Let $R=k[x_1,\dots,x_n]/I$ be a standard graded ring over a field $k$ of characteristic $p>0$ and $\fm=(x_1,\dots,x_n)$.

Let $a(H^j_{\fm}(F^e_*R))$ denote $\max\{\rho\in \QQ\mid H^j_{\fm}(F^e_*R)_{\rho}\neq 0\}$. It follows immediately from Remark \ref{remark: Frobenius of localization} that
\begin{proposition}
\label{proposition: linear growth of Frobenius of R}
$a(H^j_{\fm}(F^e_*R))=\frac{1}{p^e}a(H^j_{\fm}(R))$.
\end{proposition}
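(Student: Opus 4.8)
The statement to prove is Proposition \ref{proposition: linear growth of Frobenius of R}, which asserts $a(H^j_{\fm}(F^e_*R))=\frac{1}{p^e}a(H^j_{\fm}(R))$.

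The plan is to extract this purely from Remark \ref{remark: Frobenius of localization}, which supplies the degree-preserving isomorphism $H^j_{\fm}(F^e_*R)\cong F^e_*H^j_{\fm}(R)$. Since this isomorphism is degree-preserving, the set of degrees $\rho\in\QQ$ in which $H^j_{\fm}(F^e_*R)$ is nonzero coincides with the set of degrees in which $F^e_*H^j_{\fm}(R)$ is nonzero. So the whole task reduces to computing $a(F^e_*N)$ in terms of $a(N)$ for $N=H^j_{\fm}(R)$, an arbitrary graded artinian $R$-module (artinian since $R$ is graded over a field and $H^j_{\fm}(R)$ is a local cohomology module with support in $\fm$, hence each graded piece is finite-dimensional over $k$ and the module vanishes in high degree).

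The key step is the grading convention: by definition $\deg(F^e_*r)=\frac{1}{p^e}\deg(r)$, so the degree-$\rho$ piece $(F^e_*N)_\rho$ is exactly $F^e_*(N_{p^e\rho})$ when $p^e\rho\in\ZZ$, and is zero otherwise. Hence $(F^e_*N)_\rho\neq 0$ if and only if $p^e\rho$ is an integer and $N_{p^e\rho}\neq 0$. Taking the maximum over such $\rho$ gives $a(F^e_*N)=\frac{1}{p^e}\max\{m\in\ZZ\mid N_m\neq 0\}=\frac{1}{p^e}a(N)$. Combining with the degree-preserving isomorphism of Remark \ref{remark: Frobenius of localization} yields $a(H^j_{\fm}(F^e_*R))=a(F^e_*H^j_{\fm}(R))=\frac{1}{p^e}a(H^j_{\fm}(R))$, as claimed.

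There is essentially no obstacle here; the only point requiring a word of care is that the supremum defining $a(F^e_*N)$ is actually attained and finite, which holds because $H^j_{\fm}(R)$ is artinian and therefore vanishes in all sufficiently large degrees and has nonzero pieces only in a bounded range. One should also note the edge case $H^j_{\fm}(R)=0$, in which case both sides are understood to be $-\infty$ (or the statement is read vacuously), and the identity $a(H^j_{\fm}(F^e_*R))=\frac{1}{p^e}a(H^j_{\fm}(R))$ still holds trivially since $F^e_*(0)=0$.
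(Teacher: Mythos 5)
Your proof is correct and follows the paper's approach exactly: the paper derives the proposition immediately from the degree-preserving isomorphism $H^j_{\fm}(F^e_*R)\cong F^e_*H^j_{\fm}(R)$ of Remark~\ref{remark: Frobenius of localization} together with the grading convention $\deg(F^e_*r)=\frac{1}{p^e}\deg(r)$. You have merely spelled out the (straightforward) bookkeeping that the paper leaves implicit, including the harmless edge case $H^j_{\fm}(R)=0$.
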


\begin{lemma}
\label{lemma: linear growth tor of lc and module}
For each finitely generated graded $R$-module $M$, there is a constant $C_{ij}$ (independent of $e$) such that $a(\Tor^i_R(H^j_{\fm}(F^e_*R),M))\leq C_{ij}$ for all $e$. 
\end{lemma}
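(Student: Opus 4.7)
The plan is to combine Remark~\ref{remark: Frobenius of localization}, which identifies $H^j_{\fm}(F^e_*R)$ with the Frobenius twist $F^e_*N$ of the fixed graded Artinian module $N:=H^j_{\fm}(R)$, with the elementary observation that $\Tor^i_R(-,M)$ can be computed from a graded free resolution of $M$. Together these reduce the lemma to an $a$-invariant bound on a direct sum of shifts of $F^e_*N$, which in turn is controlled via Proposition~\ref{proposition: linear growth of Frobenius of R}.

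Because $R$ is Noetherian and $M$ is finitely generated, I will choose a graded free resolution $F_\bullet\to M$ with each $F_k=\bigoplus_l R(-d_{kl})$ finitely generated. Then
\[\Tor^i_R(F^e_*N,M)\ =\ H_i\bigl(F^e_*N\otimes_R F_\bullet\bigr)\]
is a graded subquotient of $F^e_*N\otimes_R F_i=\bigoplus_l (F^e_*N)(-d_{il})$. Since a graded subquotient has $a$-invariant no larger than its ambient module, it suffices to bound $a\bigl((F^e_*N)(-d_{il})\bigr)=a(F^e_*N)+d_{il}$ uniformly in $e$.

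The verbatim argument underlying Proposition~\ref{proposition: linear growth of Frobenius of R} only uses that $\deg(F^e_*n)=\tfrac{1}{p^e}\deg(n)$, and thus applies to any bounded-above graded module; in particular $a(F^e_*N)=\tfrac{1}{p^e}a(N)$ for the Artinian module $N$. Consequently
\[a\bigl(\Tor^i_R(F^e_*N,M)\bigr)\ \le\ \tfrac{1}{p^e}a(N)+\max_l d_{il}\ \le\ \max\{0,a(N)\}+\max_l d_{il},\]
which is independent of $e$. Taking $C_{ij}:=\max\{0,a(H^j_{\fm}(R))\}+\max_l d_{il}$ finishes the proof. I do not anticipate a real obstacle here; the whole content is that Frobenius scales degrees by $1/p^e$ so $a(F^e_*N)$ stays bounded (indeed, tends to $0$) as $e\to\infty$, and a graded free resolution propagates this bound to $\Tor$. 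The only point worth a brief check is that the $\tfrac{1}{p^e}\ZZ$-graded structure is compatible with subquotients and shifts by $\ZZ$, which is immediate from $\ZZ+\tfrac{1}{p^e}\ZZ=\tfrac{1}{p^e}\ZZ$.
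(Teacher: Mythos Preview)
Your proof is correct and follows essentially the same approach as the paper's: take a graded free resolution of $M$, observe that $\Tor$ is a subquotient of a finite direct sum of shifts of $H^j_{\fm}(F^e_*R)$, and bound the $a$-invariant using Proposition~\ref{proposition: linear growth of Frobenius of R}. You simply spell out a few details the paper leaves implicit (the subquotient inequality and an explicit choice of $C_{ij}$).
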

\begin{proof}
Let $\cF_{\bullet}$ be a graded free resolution of $M$. Write $\cF_i=\oplus_{n_i}R(\alpha_{n_i})$. Then 
\[\cF_i\otimes_R H^j_{\fm}(F^e_*R)\cong \oplus_{n_i} H^j_{\fm}(F^e_*R)(\alpha_{n_i}).\] 
The conclusion now follows from Proposition \ref{proposition: linear growth of Frobenius of R}.
\end{proof}

Consider the complex $\cC_{\bullet}$ which is the \v{C}ech complex associated with $x_1,\dots,x_n$ with the indices reversed as follows:
\[0 \leftarrow \cC_0 \leftarrow \cC_1\leftarrow \cdots \leftarrow \cC_n\leftarrow 0\]
where $\cC_j=\check{C}_{n-j}(\underline{x};R)$. Hence $H^j(\cC_{\bullet}\otimes_R N)=H^{n-j}_{\fm}(N)$ for each $R$-module $N$.

Let $M$ be a finitely generated graded $R$-module and $\cF_{\bullet}$ be a graded free resolution of $M$. Consider the double complex $\cC_{\bullet,\bullet}$:
\[
\xymatrix{
 & \vdots \ar[d] & \vdots \ar[d] &\\
0 & \cF_2\otimes_R\cC_0\otimes_RF^e_*R \ar[l] \ar[d]&  \cF_2\otimes_R\cC_1\otimes_RF^e_*R  \ar[l]\ar[d] & \cdots \ar[l]\\
0 & \cF_1\otimes_R\cC_0\otimes_RF^e_*R \ar[l] \ar[d]& \cF_1\otimes_R\cC_1\otimes_RF^e_*R \ar[l] \ar[d] &\cdots \ar[l]\\
0 & \cF_0\otimes_R\cC_0\otimes_RF^e_*R \ar[l] \ar[d]& \cF_0\otimes_R\cC_1\otimes_RF^e_*R \ar[l] \ar[d] &\cdots \ar[l]\\
 & 0 & 0
}
\]

Note that $\cC_{\bullet,\bullet}$ depends on $e$.

\begin{lemma}
\label{lemma: linear growth of total complex}
For each integer $\alpha$, there is a constant $C_{\alpha}$ (independent of $e$) such 
\[a(H_{\alpha}(\Tot(\cC_{\bullet,\bullet})))\leq C_{\alpha}.\]
\end{lemma}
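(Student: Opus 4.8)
The plan is to read the bound off one of the two spectral sequences attached to the double complex $\cC_{\bullet,\bullet}$, namely the one that takes homology first in the \v{C}ech (horizontal) direction. Concretely, I would filter $\Tot(\cC_{\bullet,\bullet})$ by the homological degree $i$ of the free resolution $\cF_{\bullet}$ (i.e.\ by rows). For each fixed total degree $\alpha$ the term $\bigoplus_{i+j=\alpha}\cF_i\otimes_R\cC_j\otimes_RF^e_*R$ is a \emph{finite} direct sum, since the \v{C}ech complex contributes only $n+1$ columns ($0\le j\le n$), and the induced filtration on it is finite; hence the resulting spectral sequence converges to $H_{\bullet}(\Tot(\cC_{\bullet,\bullet}))$ even though $\cF_{\bullet}$ may be an infinite resolution.

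Next I would identify the low pages. On the $E^0$-page the only surviving differential is the \v{C}ech differential, so $E^1$ is the horizontal homology; since each $\cF_i$ is a finitely generated free module, $\cF_i\otimes_R(-)$ commutes with homology, and the identity $H^j(\cC_{\bullet}\otimes_RN)=H^{n-j}_{\fm}(N)$ gives $E^1_{i,j}\cong\cF_i\otimes_RH^{n-j}_{\fm}(F^e_*R)$. The induced $d^1$ is the differential of $\cF_{\bullet}$, acting within a fixed column, so
\[E^2_{i,j}\ \cong\ H_i\!\bigl(\cF_{\bullet}\otimes_RH^{n-j}_{\fm}(F^e_*R)\bigr)\ \cong\ \Tor^i_R\!\bigl(H^{n-j}_{\fm}(F^e_*R),\,M\bigr),\]
using the symmetry of $\Tor$ in the last step. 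Now Lemma \ref{lemma: linear growth tor of lc and module} applies verbatim: there is a constant $C_{i,n-j}$, independent of $e$, with $a(E^2_{i,j})\le C_{i,n-j}$.

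To finish, I would push this through the rest of the spectral sequence and the filtration on homology. Each page $E^{r+1}$ is a subquotient of $E^r$, and $a(N')\le a(N)$ whenever $N'$ is a graded subquotient of $N$; hence $a(E^{\infty}_{i,j})\le a(E^2_{i,j})\le C_{i,n-j}$ for all $e$. Since $H_{\alpha}(\Tot(\cC_{\bullet,\bullet}))$ carries a finite filtration whose successive quotients are the $E^{\infty}_{i,j}$ with $i+j=\alpha$, $i\ge 0$, $0\le j\le n$, and since $a$ of a graded extension is the maximum of $a$ of its subobject and its quotient, we obtain
\[a\!\bigl(H_{\alpha}(\Tot(\cC_{\bullet,\bullet}))\bigr)\ \le\ \max\{\,C_{i,n-j}\ :\ i+j=\alpha,\ i\ge 0,\ 0\le j\le n\,\}\ =:\ C_{\alpha},\]
which does not depend on $e$.

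I do not expect a genuine obstacle here; the remaining work is essentially bookkeeping. The two points to be careful about are (i) that the half-plane double complex — finite in the column direction but possibly infinite in the row direction — still yields a convergent spectral sequence, and (ii) that the invariant $a(-)$ is non-increasing under graded subquotients and is controlled by the layers of a finite filtration. All of the substantive, $e$-uniform content has already been isolated in Lemma \ref{lemma: linear growth tor of lc and module}.
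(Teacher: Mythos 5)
Your proposal is correct and follows essentially the same route as the paper: filter $\Tot(\cC_{\bullet,\bullet})$ so that the \v{C}ech (horizontal) homology is taken first, identify the $E^2$-page with $\Tor_i^R(H^{j}_{\fm}(F^e_*R),M)$, invoke Lemma \ref{lemma: linear growth tor of lc and module} for the $e$-uniform bound, and pass to $E^{\infty}$ and the finite filtration on total homology. The only difference is cosmetic: you justify finiteness via $0\le j\le n$ coming from the \v{C}ech complex, whereas the paper invokes $H^j_{\fm}=0$ for $j>\dim R$; both immediately give that only finitely many $E^2$-terms contribute to each $H_\alpha$.
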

\begin{proof}
It is clear that $\cC_{\bullet,\bullet}$ is a first quadrant double complex and hence it induces two spectral sequences both of which converge to the homology of the total complex (\cite[\S~5.6]{WeibelHomologicalAlgebraBook}). One of these spectral sequence comes from taking homology horizontally first and then vertically, whose $E^2$-page is given by $\Tor_i^R(H^j_{\fm}(F^e_*R),M)$. Since $H^j_{\fm}(F^e_*R)=0$ for all $j\geq d+1$ with $d=\dim(R)$, the $E^{\infty}$-page is the same as the $E_{d+1}$-page. Hence there are only finitely many $\Tor_i^R(H^j_{\fm}(F^e_*R),M)$ contributing to $H_{\alpha}(\Tot(\cC_{\bullet,\bullet}))$ for each integer $\alpha$. Lemma \ref{lemma: linear growth tor of lc and module} finishes the proof.
\end{proof}

To prove our main result, we need the following theorem due to Peskine-Szpiro \cite{PeskineSzpiro}:
\begin{theorem}[Peskine-Szpiro Acyclicity Lemma]
\label{theorem: acyclicity lemma}
Let $R$ be a noetherian commutative ring of characteristic $p>0$ and $N$ be a finite $R$-module. If $N$ has finite projective dimension, then
\[\Tor_i^R(F^e_*R,N)=0\]
for all $i\geq 1$ and all $e$.
\end{theorem}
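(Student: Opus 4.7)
The plan is to prove the Tor-vanishing by induction on $\dim R$ after localizing, invoking the Peskine-Szpiro acyclicity lemma to convert a depth bound on the Frobenius-twisted resolution into a vanishing statement.

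First I would reduce to the local case: since $\Tor$ commutes with localization, it suffices to verify $\Tor_i^{R_\fm}(F^e_*R_\fm, N_\fm) = 0$ for each maximal ideal $\fm$, so assume $(R, \fm)$ is local. The base case $\dim R = 0$ is trivial: by Auslander-Buchsbaum, $\operatorname{pd}_R N \leq \depth R = 0$, so $N$ is free. For the inductive step, assume the result holds in all smaller dimensions. For each prime $\fp \subsetneq \fm$, $\dim R_\fp < \dim R$ and $N_\fp$ has finite projective dimension over $R_\fp$, so by induction $\Tor_i^{R_\fp}(F^e_*R_\fp, N_\fp) = 0$ for $i \geq 1$. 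Localizing $\Tor$ (via the standard isomorphism $(F^e_*R)_\fp \cong F^e_*R_\fp$ of the kind noted in Remark \ref{remark: Frobenius of localization}) then shows $\Tor_i^R(F^e_*R, N)$ is supported only at $\fm$.

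Next I would choose a minimal finite free resolution $0 \to F_d \to \cdots \to F_0 \to N \to 0$ of length $d = \operatorname{pd}_R N$ and form the complex $G_\bullet := F^e_*R \otimes_R F_\bullet$, whose $j$-th homology is $\Tor_j^R(F^e_*R, N)$. Each term $G_j \cong (F^e_*R)^{\operatorname{rank} F_j}$ satisfies $\depth_R G_j = \depth_R F^e_*R = \depth R$, because an element of $R$ is a non-zero-divisor on $F^e_*R$ if and only if its $p^e$-th power is, which preserves regular-sequence status. Auslander-Buchsbaum gives $\depth R \geq d \geq j$, so $\depth G_j \geq j$ for all $j$. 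Moreover each $G_j$ is finitely generated (using $F$-finiteness, which is automatic in the graded $k$-algebra setting relevant to the paper), so each $\Tor_j^R(F^e_*R, N)$ is a finitely generated $R$-module supported only at $\fm$, hence of finite length. The Peskine-Szpiro acyclicity lemma now applies: a complex of finitely generated $R$-modules with $\depth G_j \geq j$ whose higher homologies have finite length must be acyclic in positive degrees. Thus $\Tor_j^R(F^e_*R, N) = 0$ for all $j \geq 1$, closing the induction.

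The main obstacle is the acyclicity lemma itself, proved by descending induction on the top index: $H_s(G_\bullet)$ is a finite-length submodule of the depth-$\geq s \geq 1$ module $G_s$, which forces $H_s = 0$ (any nonzero finite-length submodule would place $\fm$ in the associated primes of $G_s$, contradicting positive depth); then one truncates the complex by quotienting $G_{s-1}$ by $G_s$ and iterates using the depth lemma on the resulting short exact sequence. A secondary technicality is the reduction to the $F$-finite situation needed for the finite generation of $F^e_*R$; this is automatic in the paper's graded $k$-algebra context, and in the full generality of the statement one passes to the completion $\hat{R}$, which can be arranged to be $F$-finite after an appropriate enlargement of the residue field.
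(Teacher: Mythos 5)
The paper does not actually prove this statement---it is imported verbatim from Peskine--Szpiro---so the comparison is with the classical argument. Your overall strategy is exactly that classical one: reduce to the local case, induct on dimension to force the higher Tor modules to be supported at $\fm$, compute $\depth_R F^e_*R=\depth R$, use Auslander--Buchsbaum to get $\depth\geq\operatorname{pd}_RN$, and finish with the acyclicity lemma (the ``lemme d'acyclicit\'e,'' which is a genuinely different statement from the theorem being proved, so there is no circularity). All of those steps are correct as you present them, and your sketch of the acyclicity lemma itself is the standard descending induction.

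The genuine gap is your reliance on finite generation of $F^e_*R$. First, the claim that $F$-finiteness ``is automatic in the graded $k$-algebra setting relevant to the paper'' is false: a finitely generated $k$-algebra is $F$-finite only when $[k:k^p]<\infty$, and the paper allows an arbitrary field of characteristic $p$ (and the theorem as stated allows an arbitrary noetherian ring). Second, the proposed repair---pass to $\hat R$ or to a $\Gamma$-type enlargement---does not work as stated, because the object you are studying does not base change the way you need: for non-$F$-finite $R$ the natural map $F^e_*R\otimes_R\hat R\to F^e_*\hat R$ is not an isomorphism, so faithfully flat descent gives you information about $\Tor_i^{\hat R}(F^e_*R\otimes_R\hat R,\hat N)$, not about $\Tor_i^{\hat R}(F^e_*\hat R,\hat N)$, and vanishing over the $F$-finite enlargement does not obviously pull back. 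The standard fix, which also removes the $F$-finiteness issue entirely, is to untwist: if $F_\bullet$ is a finite free resolution of $N$ with differentials given by matrices $(r_{ij})$, then $F^e_*R\otimes_RF_\bullet\cong F^e_*\bigl(F^{e}(F_\bullet)\bigr)$, where $F^{e}(F_\bullet)$ is the complex of finitely generated \emph{free} $R$-modules whose matrices have entries $r_{ij}^{p^e}$; since $F^e_*$ is exact, $\Tor_i^R(F^e_*R,N)=0$ if and only if $H_i\bigl(F^{e}(F_\bullet)\bigr)=0$. Running your argument (support at $\fm$ by induction on dimension, $\depth$ of free modules $=\depth R\geq\operatorname{pd}_RN$, acyclicity lemma) on $F^{e}(F_\bullet)$ makes every term finitely generated for free, makes the localization and any flat base change steps transparent, and is precisely how Peskine--Szpiro argue. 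With that substitution your proof is complete; as written, the non-$F$-finite case is not covered.
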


Recall that $Npd(M)$ denote the locus where $M$ does not have finite projective dimension, {\it i.e.} \[Npd(M)=\{\fp\in \Spec(R)\mid M_{\fp} {\rm \ does\ not\ have\ finite\ projective\ dimension\ over\ } R_{\fp}\}.\]

\begin{proof}[Proof of Theorem \ref{main theorem}]
We consider the other spectral sequence induced by the double complex $\cC_{\bullet,\bullet}$, {\it i.e.} by taking homology vertically first and then horizontally:
\[E^2_{i,j}=H^{n-i}_{\fm}(\Tor_j^R(F^e_*R,M))\Rightarrow H_{i+j}(\Tot(\cC_{\bullet,\bullet})).\]
The differential coming into $H^s_{\fm}(M\otimes_RF^e_*R)=E^2_{n-s,0}$ is 0 since $E^2_{n-s+2,-1}=0$. Set $\dim(Npd(M))=t$. We can see that $\dim(\Tor_1(F^e_*R,M))\leq t$ by Theorem \ref{theorem: acyclicity lemma}. Hence, if $s\geq t-1$, we have $H^{s+2}_{\fm}(\Tor_1(F^e_*R, M))=0$ which implies that the differential going out of $H^s_{\fm}(M\otimes_RF^e_*R)=E^2_{n-s,0}$ is also 0. Therefore, if $s\geq t-1$, we have
\[H^s_{\fm}(M\otimes_RF^e_*R)=E^{\infty}_{n-s,0}.\]
According to Lemma \ref{lemma: linear growth of total complex}, there is a constant $C_{n-s}$ such that $a(H_{n-s}(\Tot(\cC_{\bullet,\bullet})))\leq C_{n-s}$. Since $E^{\infty}_{n-s,0}$ is a subquotient of $H_{n-s}(\Tot(\cC_{\bullet,\bullet}))$, one can see that 
\[a(H^s_{\fm}(M\otimes_RF^e_*R))\leq C_{n-s}.\]
This finishes the proof.
\end{proof}

\begin{proof}[Proof of Corollary \ref{corollary: consequence for ideals}]
It is well known that there is a degree-preserving isomorphism of $F_*^eR$-modules (hence $R$-modules):
\[R/J\otimes_RF^e_*R \xrightarrow{\bar{r}\otimes r'\mapsto \overline{r^{p^e}r'}}F^e_*R/J^{[p^e]}\]
for each graded ideal $J$ of $R$, where the grading on $F^e_*M$ is given by $\deg(F^e_*z)=\frac{1}{p^e}\deg(z)$ for any graded $R$-module $M$. It follows from Theorem \ref{main theorem} that there is a constant $C$ such that 
\[a(H^s_{\fm}(R/J\otimes_RF^e_*R))\leq C\]
for each $s\geq \dim(Npd(R/J))-1$ and all $e\geq 1$. Therefore,
\[a(H^s_{\fm}(R/J^{[p^e]}))=p^ea(F^e_*H^s_{\fm}(R/J^{[p^e]}))=p^ea(H^s_{\fm}(F^e_*R/J^{[p^e]})=p^ea(H^s_{\fm}(R/J\otimes_RF^e_*R))\leq Cp^e\]
for each $s\geq \dim(Npd(R/J))-1$ and all $e\geq 1$.
\end{proof}

\begin{remark}
Since $Npd(R/J)\subset \Sing(R)\cap \Supp(R/J)$, Corollary \ref{corollary: consequence for ideals} recovers \cite[3.5]{KatzmanZhangRegularityandFJumpingNumbers}.
\end{remark}

\bibliographystyle{skalpha}
\bibliography{lineargrowth}

\end{document}